\numberwithin{equation}{section}
\newtheorem{thm}{Theorem}[section]
\newcommand{\LR}[1]{{\langle {#1} \rangle }}
\newcommand{\EQ}[1]{\begin{equation} \begin{split} #1
 \end{split} \end{equation}}
\newcommand{\EQS}[1]{\begin{align} #1 \end{align}}
\newcommand{\EQQS}[1]{\begin{align*} #1 \end{align*}}
\title[LWP for HWS equation]{A remark on the Half wave Schr\"odinger equation in the energy space} 
\author[I. Kato]{Isao Kato} %\\ 
\address[Isao Kato]{Department of Mathematics, Graduate School of Science, Kyoto University,
Kitashirakawa Oiwake-cho, Sakyo-ku, Kyoto, 606-8502, Japan} 
\email[Isao Kato]{kato.isao.23n@st.kyoto-u.ac.jp}
\keywords{Cauchy problem, well-posedness, energy space} 
\begin{document}

\begin{abstract}
 We investigate the Cauchy problem for the half wave Schr\"odinger equation in the energy space. 
We derive the local well-posedness in the energy space for the odd power type nonlinearities under certain additional assumption for the initial data, namely $\hat{u}_0 \in L^1_{\xi, \eta}(\mathbb{R}^2)$.  
 
\end{abstract}

\maketitle
\setcounter{page}{001}

\section{Introduction} 
We study the Cauchy problem for the following equation: 
\EQ{ \label{NLS} 
 i\partial_t u + \partial_x^2 u - |D_y| u &= \mu |u|^{p-1}u, \qquad (t, x, y) \in [-T, T] \times \mathbb{R}^2, \\ 
 u(0, x, y) &= u_0(x, y) \in H^{s_1, s_2}(\mathbb{R}^2), 
}
where $|D_y| = (-\partial_y^2)^{\frac{1}{2}}, \mu = \pm 1, p > 1, T > 0$ and $s_1, s_2 \in \mathbb{R}$. 
Also, we define the anisotropic Sobolev spaces $H^{s_1, s_2}(\mathbb{R}^2)$ as 
\EQQS{ 
 H^{s_1, s_2}(\mathbb{R}^2) &= \{ f \in \mathcal{S}'(\mathbb{R}^2)\, ;\, \|f\|_{H^{s_1, s_2}} < \infty \}, \\ 
 \|f\|_{H^{s_1, s_2}} &:= \Bigl( \int_{\mathbb{R}^2} \LR{\xi}^{2s_1} \LR{\eta}^{2s_2} |\hat{f}(\xi, \eta)|^2 \, \mathrm{d}\xi \, \mathrm{d}\eta \Bigr)^{\frac{1}{2}},\qquad \LR{\cdot} := \sqrt{1 + |\cdot |^2}. 
}
\eqref{NLS} with $\mu = 1, p = 3$ is firstly considered by Xu \cite{Xu} in the analysis of large time behavior of the solution for smooth data.  
After \cite{Xu}, Bahri, Ibrahim and Kikuchi \cite{BIK} obtained the local well-posedness for rough data, namely $s_1 = 0, (1 >) s_2 > \frac{1}{2}$ and $1 < p \leqslant 5$ by the fixed point argument. 
\eqref{NLS} has the following conservation laws (the mass $M(u)$ and the energy $E(u)$): 
\EQQS{
 M(u) &= \int_{\mathbb{R}^2} |u|^2 \, \mathrm{d}x\, \mathrm{d}y, \\ 
 E(u) &= \frac{1}{2} \int_{\mathbb{R}^2} \left( |\partial_x u|^2 + |D_y|u \cdot \bar{u} \right) \mathrm{d}x \, \mathrm{d}y - \frac{\mu}{p+1} \int_{\mathbb{R}^2} |u|^{p + 1}\, \mathrm{d}x\, \mathrm{d}y.  
}
Hence the energy space $E$ for \eqref{NLS} lies in $H^{1, 0}(\mathbb{R}^2) \cap H^{0, \frac{1}{2}}(\mathbb{R}^2)$ 
equipped with the norm 
\begin{align*}
 \|u\|_E := \left(\|\partial_x u\|_{L^2}^2 + \||D_y|^{\frac{1}{2}} u\|_{L^2}^2 + \|u\|_{L^2}^2  \right)^{\frac{1}{2}}.  
\end{align*} 
The well-posedness in $E$ is unknown. 
In Proposition 5.4 \cite{BIK}, the Strichartz estimates require $s_2 > \frac{1}{2}$, hence we cannot apply it for the problem. 
Moreover, we cannot apply the Yudovich argument directly to prove uniqueness for \eqref{NLS}. 
If we apply the argument, we need to show $\|u\|_{L^q_{x, y}} < \infty$ for $q$ large enough. 
However, the Gagliardo-Nirenberg inequality shows that it only holds for $(2 <) q < 6$.      
These are the main obstacles to obtain well-posedness in $E$. 

In this paper, we verify the following local well-posedness result in $E = H^{1, 0}(\mathbb{R}^2) \cap H^{0, \frac{1}{2}}(\mathbb{R}^2)$ under additional assumption on the initial data.   
\begin{thm} \label{mth} 
 Let $p = 2k + 1, k \in \mathbb{N}, \mu = \pm 1$. 
 Suppose that $u_0 \in E$ and $\hat{u}_0 \in L^1_{\xi, \eta}(\mathbb{R}^2)$. 
 Then, \eqref{NLS} is locally well-posed. 
 More precisely, there exists $T = T(\|u_0\|_E, \|\hat{u}_0\|_{L^1_{\xi, \eta}}) > 0$ such that 
 \eqref{NLS} has a unique solution $u$ satisfying  
 \EQQS{
  u \in C([-T, T] \, ;\, E) \cap L^{\infty}_{T, x, y} \quad \text{and}\quad \hat{u} \in L^{\infty}_T L^1_{\xi, \eta}.  
 }  
\end{thm}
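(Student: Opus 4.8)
The plan is to solve the Duhamel formulation
\[
u(t) = U(t)u_0 - i\mu\int_0^t U(t-t')\,|u|^{p-1}u(t')\,\mathrm{d}t', \qquad U(t):=e^{it(\partial_x^2-|D_y|)},
\]
by a contraction mapping argument in the resolution space
\[
X_T := \bigl\{u : \|u\|_{X_T}:=\|u\|_{L^\infty_T E}+\|\hat u\|_{L^\infty_T L^1_{\xi,\eta}}<\infty\bigr\}.
\]
The whole point of the hypothesis $\hat u_0\in L^1$ is that $\mathcal{F}L^1$ is simultaneously (i) invariant under $U(t)$, which is the Fourier multiplier by the unimodular symbol $e^{-it(\xi^2+|\eta|)}$, so that $\|\widehat{U(t)f}\|_{L^1}=\|\hat f\|_{L^1}$ and likewise $U(t)$ is an isometry on $E$; (ii) an algebra, because multiplication becomes convolution and $\|\hat f*\hat g\|_{L^1}\le\|\hat f\|_{L^1}\|\hat g\|_{L^1}$; and (iii) embedded in $L^\infty$, with $\|u\|_{L^\infty_{x,y}}\lesssim\|\hat u\|_{L^1_{\xi,\eta}}$. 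Property (iii) is exactly what replaces the Strichartz and Gagliardo--Nirenberg bounds that fail at the energy regularity, and it supplies the $L^\infty_{T,x,y}$ membership for free.

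First I would record two nonlinear estimates in Fourier variables. Writing $p=2k+1$, the nonlinearity is the monomial $|u|^{p-1}u=u^{k+1}\bar u^{k}$, whose Fourier transform is a $p$-fold convolution of copies of $\hat u(\zeta)$ and $\overline{\hat u(-\zeta)}$; the $\mathcal{F}L^1$ bound $\|\widehat{|u|^{p-1}u}\|_{L^1}\le\|\hat u\|_{L^1}^{p}$ is then immediate from (ii). For the energy bound, set $w(\xi,\eta):=(1+\xi^2+|\eta|)^{1/2}\sim 1+|\xi|+|\eta|^{1/2}$, so that $\|u\|_E=\|w\hat u\|_{L^2}$. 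The key structural fact is that $w$ is subadditive under frequency addition: if $(\xi,\eta)=\sum_{j}\epsilon_j(\xi_j,\eta_j)$ with $\epsilon_j\in\{\pm1\}$, then $|\xi|\le\sum_j|\xi_j|$ and $|\eta|^{1/2}\le(\sum_j|\eta_j|)^{1/2}\le\sum_j|\eta_j|^{1/2}$, whence $w(\xi,\eta)\lesssim\sum_j w(\xi_j,\eta_j)$. Applying this inside the convolution lets the weight fall on a single factor while the remaining $p-1$ factors are kept in $L^1$; Young's inequality $L^2*L^1*\cdots*L^1\to L^2$ then yields $\||u|^{p-1}u\|_E\lesssim\|u\|_E\,\|\hat u\|_{L^1}^{p-1}$. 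I would emphasize that it is this Fourier-side subadditivity of $|\eta|^{1/2}$ that disposes of the half-wave term $|D_y|$ and sidesteps any fractional Leibniz rule.

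With these two estimates in hand, the map $\Phi(u)(t):=U(t)u_0-i\mu\int_0^t U(t-t')|u|^{p-1}u\,\mathrm{d}t'$ satisfies $\|\Phi(u)\|_{X_T}\le\|u_0\|_E+\|\hat u_0\|_{L^1}+CT\|u\|_{X_T}^{p}$, since the time integral together with the isometry properties of $U$ produces the gain $T$. Exploiting the monomial structure, the difference $|u|^{p-1}u-|v|^{p-1}v$ expands as a telescoping sum in which every summand carries one factor $u-v$ or $\bar u-\bar v$ and $p-1$ factors among $u,\bar u,v,\bar v$; the same two estimates give $\|\Phi(u)-\Phi(v)\|_{X_T}\le CT(\|u\|_{X_T}+\|v\|_{X_T})^{p-1}\|u-v\|_{X_T}$. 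Hence on the ball of radius $R=2(\|u_0\|_E+\|\hat u_0\|_{L^1})$ in $X_T$, choosing $T=T(\|u_0\|_E,\|\hat u_0\|_{L^1})$ small makes $\Phi$ a contraction, and the Banach fixed point theorem produces a unique solution in that ball; uniqueness in all of $X_T$ follows by a standard continuation argument. Finally $u\in C([-T,T];E)$ follows from strong continuity of the unitary group $U(\cdot)$ on $E$ together with continuity of the Duhamel integral as an $E$-valued function, while $L^\infty_{T,x,y}$ and $\hat u\in L^\infty_T L^1$ are already encoded in $X_T$.

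The main obstacle I anticipate is the energy estimate for the non-smooth symbol $|\eta|^{1/2}$: in physical space one would be forced into a Kato--Ponce fractional Leibniz inequality for $|D_y|^{1/2}$ acting on a product, which is delicate at this regularity. Working entirely on the Fourier side and leaning on the elementary subadditivity $|\eta|^{1/2}\le\sum_j|\eta_j|^{1/2}$ together with the $L^1$ control of the remaining factors is what makes the argument close cleanly; the remaining steps are routine once the two nonlinear estimates are established.
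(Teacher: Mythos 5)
Your proposal is correct and follows the same overall skeleton as the paper: a Banach fixed-point argument for the Duhamel map in a resolution space that couples the energy norm with $\|\hat u\|_{L^\infty_T L^1_{\xi,\eta}}$, exploiting that the propagator is a unimodular Fourier multiplier (hence an isometry on both $E$ and $\mathcal F L^1$), that $\mathcal F L^1$ is a convolution algebra controlling $\|\widehat{|u|^{p-1}u}\|_{L^1}\le\|\hat u\|_{L^1}^p$ for odd $p$, and that $L^\infty_{x,y}$ comes for free from $\|\hat u\|_{L^1}$ (the paper carries $\|u\|_{L^\infty_{T,x,y}}$ as a separate component of its norm $\|\cdot\|_Y$, but this is only a cosmetic difference). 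Where you genuinely diverge is the energy bound on the nonlinearity. The paper asserts $\||u|^{p-1}u\|_{E}\le C\|u\|_{L^\infty_{x,y}}^{p-1}\|u\|_{E}$ and moves on; for the $\partial_x$ and $L^2$ components this is an elementary chain rule, but for the $\||D_y|^{1/2}\cdot\|_{L^2}$ component it implicitly invokes a fractional chain/Leibniz rule for $|D_y|^{1/2}$ applied to the monomial $u^{k+1}\bar u^k$, which is true but not elementary and is left unjustified in the text. Your alternative — writing $\|u\|_E=\|w\hat u\|_{L^2}$ with $w\sim 1+|\xi|+|\eta|^{1/2}$, using the subadditivity $|\eta|^{1/2}\le\sum_j|\eta_j|^{1/2}$ to drop the weight onto one convolution factor, and closing with Young's inequality $L^2*L^1*\cdots*L^1\to L^2$ to get $\||u|^{p-1}u\|_E\lesssim\|u\|_E\|\hat u\|_{L^1}^{p-1}$ — is entirely self-contained, avoids any fractional Leibniz machinery, and makes essential use of the $\mathcal F L^1$ control that the ambient space already provides. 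In that one step your argument is cleaner and more rigorous than the paper's; the trade-off is that your constant involves $\|\hat u\|_{L^1}^{p-1}$ rather than $\|u\|_{L^\infty}^{p-1}$, which is harmless here since both are part of the iteration norm. The remaining steps (telescoping for the difference estimate, choice of $T$, continuity in time, uniqueness) match the paper.
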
 

$\hat{u}_0 \in L^1_{\xi, \eta}(\mathbb{R}^2)$ in Theorem \ref{mth} seems to be somewhat extra assumption.    
However in order to control $\|u\|_{L^{\infty}_{T, x, y}}$, we need this assumption.  
Also, we suppose the power $p$ is odd to estimate the Duhamel term, see section 2 for details. 
We remark that in Theorem 1.1 \cite{Ka}, the norm inflation (ill-posedness) holds in $E$ for $p > 5$, however in our Theorem \ref{mth}, if we additionally suppose $\hat{u}_0 \in L^1_{\xi, \eta}(\mathbb{R}^2)$, then (local) well-posedness in $E$ holds even if $p > 5$ provided that $p$ is odd.

%%%%%%%%%%%%%%%%%%%%%%%%%%%%%%%%%%%%
%%% Existence of solution in $E$ %%%   
%%%%%%%%%%%%%%%%%%%%%%%%%%%%%%%%%%%%
\section{Proof of Theorem \ref{mth}} 
In this section, we prove Theorem \ref{mth}. 
For a Banach space $X$ and $r > 0$, we define $B_r(X) := \{ f \in X \, ;\, \|f\|_X \leqslant r \}$. 
Throughout the paper, $\hat{\cdot}\, $ denotes the Fourier transform with respect to spatial variables $x$ and  $y$.     
\begin{proof}[Proof of Theorem \ref{mth}]
We prove the local well-posedness for \eqref{NLS} with initial data $u_0 \in E$ and $\hat{u}_0 \in L^1_{\xi, \eta}(\mathbb{R}^2)$ by the fixed point argument. 
By the Duhamel formula, 
\EQS{ \label{Duhamel}
 \Phi (u) = S(t)u_0 - i\mu \int_0^t S(t - \tau) (|u|^{p-1}u)(\tau)\, \mathrm{d}\tau,  
} 
where $S(t) := \text{exp}\{ it(\partial_x^2 - |D_y|) \}$ be the $L^2_{x, y}$ unitary operator for \eqref{NLS}. 
From \eqref{Duhamel},    
\EQQS{ %\label{Duhamel_hat} 
 \widehat{\Phi (u)} = e^{-it(\xi^2 + |\eta|)}\hat{u}_0 - i\mu \int_0^t e^{-(t - \tau)(\xi^2 + |\eta|)} \widehat{|u|^{p-1}u}(\tau) \, \mathrm{d}\tau. 
} 
Let $Y := \{ u \in C([-T, T];\, E) \cap L^{\infty}_{T, x, y} \, ;\, \hat{u} \in L^{\infty}_T L^1_{\xi, \eta}\ \text{and}\ \|u\|_Y < \infty \}$ endowed with the norm 
\EQQS{
 \|u\|_Y := \|u\|_{L^{\infty}_T E} + \|u\|_{L^{\infty}_{T, x, y}} + \|\hat{u}\|_{L^{\infty}_T L^1_{\xi, \eta}}. 
}
Let us verify $\Phi$ is a contraction map in $Y$. 
Firstly, we show $\Phi$ is a map in $Y$. 
Suppose that $u_0 \in B_{\delta}(E), \hat{u}_0 \in B_{\delta}(L^1_{\xi, \eta})$ and $u \in B_r(Y)$. 
Then by $\widehat{S(t) u_0} = e^{-it(\xi^2 + |\eta|)}\hat{u}_0$, it is clear that 
\EQS{ \label{linear}
 \|S(t) u_0\|_Y 
 &= \|S(t) u_0\|_{L^{\infty}_T E} + \|S(t) u_0\|_{L^{\infty}_{T, x, y}} + \|\widehat{S(t) u_0}\|_{L^{\infty}_T L^1_{\xi, \eta}} \notag \\  
 &\leqslant \|u_0\|_E + 2\|\hat{u}_0\|_{L^1_{\xi, \eta}} \leqslant 3 \delta.  
} 
The Duhamel term is estimated as follows. 
Since $S$ is the unitary operator in $L^2_{x, y}(\mathbb{R}^2)$ and $p = 2k + 1, k \in \mathbb{N}$, 
we obtain 
\EQS{ \label{Duhamel_E}
 \Bigl\| \int_0^t S(t - \tau) (|u|^{p - 1}u)(\tau)\, \mathrm{d} \tau \Bigr\|_{L^{\infty}_T E} 
 &\leqslant \int_0^T \|S(t - \tau) (|u|^{p - 1}u)(\tau)\|_{L^{\infty}_T E}\, \mathrm{d}\tau \notag \\ 
 &\leqslant CT \||u|^{p - 1}u \|_{L^{\infty}_T E} \notag \\ 
 &\leqslant CT \|u\|_{L^{\infty}_{T, x, y}}^{p - 1} \|u\|_{L^{\infty}_T E} 
  \leqslant CT r^p. 
}
By $p = 2k + 1, k \in \mathbb{N}$ and the Young inequality, we have 
\EQS{ \label{Duhamel_L^infty}
 \Bigl\| \int_0^t S(t - \tau) (|u|^{p - 1}u)(\tau)\, \mathrm{d} \tau \Bigr\|_{L^{\infty}_{T, x, y}} 
 &\leqslant CT \|\widehat{|u|^{p - 1}u}\|_{L^{\infty}_T L^1_{\xi, \eta}} 
  \leqslant CT \|\hat{u}\|_{L^{\infty}_T L^1_{\xi, \eta}}^p 
  \leqslant CTr^p.   
}
Again by $p = 2k + 1, k \in \mathbb{N}$ and the Young inequality lead 
\EQS{ \label{Duhamel_L^1} 
 \Bigl\| \int_0^t e^{-i(t - \tau)(\xi^2 + |\eta|)} \widehat{|u|^{p - 1}u}(\tau) \, \mathrm{d} \tau \Bigr\|_{L^{\infty}_T L^1_{\xi, \eta}} 
 \leqslant CT \| \widehat{|u|^{p - 1}u}\|_{L^{\infty}_T L^1_{\xi, \eta}} 
% \leqslant CT \|\hat{u}\|_{L^{\infty}_T L^1_{\xi, \eta}}^p 
 \leqslant CT r^p.  
}
From \eqref{linear}--\eqref{Duhamel_L^1}, if we take 
$\delta, T > 0$ such that $3 \delta \leqslant \frac{1}{2}r$ and $3CTr^p \leqslant \frac{1}{2}r$, 
then $\Phi$ is a map in $Y$. 

Next, we show the contraction of $\Phi$. 
Set $u, v \in B_r(Y)$. 
Then from $p = 2k + 1, k \in \mathbb{N}$, we have 
\EQQS{
 &\Bigl\| \int_0^t S(t - \tau) (|u|^{p - 1}u - |v|^{p - 1}v)(\tau)\, \mathrm{d}\tau \Bigr\|_{L^{\infty}_T E} 
  \leqslant CT \||u|^{2k}u - |v|^{2k}v\|_{L^{\infty}_T E}.  
% &\leqslant CT (\|(|u|^{2k} + |v|^{2k})(u - v)\|_{L^{\infty}_T E} + \|uv(|u|^{2(k - 1)}\bar{u} - |v|^{2(k - 1)}\bar{v})\|_{L^{\infty}_T E}).  
} 
By induction, we easily check 
\EQQS{
 \| |u|^{2k}u - |v|^{2k}v \|_{L^{\infty}_T E} \leqslant (2k + 1)^2 r^{2k} \|u - v\|_{L^{\infty}_T E \, \cap \,  L^{\infty}_{T, x, y}}. 
}
Hence we obtain 
%We see the first term of the right-hand side of the above inequality is bounded by 
%\EQQS{
% \|(|u|^{2k} + |v|^{2k})(u - v)\|_{L^{\infty}_T E} 
% &\leqslant (\|u\|_{L^{\infty}_T E \, \cap \, L^{\infty}_{T, x, y}}^{2k} + \|v\|_{L^{\infty}_T E \, \cap \, L^{\infty}_{T, x, y}}^{2k})\|u - v\|_{L^{\infty}_T E \, \cap \, L^{\infty}_{T, x, y}} \\ 
% &\leqslant (\|u\|_Y^{2k} + \|v\|_Y^{2k})\|u - v\|_Y 
% &\leqslant 2r^{2k} \|u - v\|_{L^{\infty}_T E \, \cap \, L^{\infty}_{T, x, y}}.   
%} 
%We observe that the second term is bounded by 
%\EQQS{
% &\|uv(|u|^{2(k - 1)}\bar{u} - |v|^{2(k - 1)}\bar{v})\|_{L^{\infty}_T E} \\ 
% &\leqslant \|u\|_{L^{\infty}_T E \, \cap \, L^{\infty}_{T, x, y}} \|v\|_{L^{\infty}_T E \, \cap \, L^{\infty}_{T, x, y}} \| |u|^{2(k - 1)}\bar{u} - |v|^{2(k - 1)}\bar{v} \|_{L^{\infty}_T E \, \cap \, L^{\infty}_{T, x, y}} \\ 
% &\leqslant r^2 \| |u|^{2(k - 1)}\bar{u} - |v|^{2(k - 1)}\bar{v} \|_{L^{\infty}_T E \, \cap \, L^{\infty}_{T, x, y}}.   
%}
%Hence we obtain inductively 
\EQS{ \label{Duhamel_differnce_E}
 \Bigl\| \int_0^t S(t - \tau)(|u|^{p - 1}u - |v|^{p - 1}v)(\tau)\, \mathrm{d}\tau \Bigr\|_{L^{\infty}_T E} 
 &\leqslant CT(2k + 1)^2 r^{2k} \|u - v\|_{L^{\infty}_T E \, \cap \, L^{\infty}_{T, x, y}} \notag \\ 
 &\leqslant  CT(2k + 1)^2 r^{2k} \|u - v\|_Y.                         
}
From $p = 2k + 1, k \in \mathbb{N}$, we have 
\EQS{
 \Bigl\| \int_0^t S(t - \tau)(|u|^{p - 1}u - |v|^{p - 1}v)(\tau)\, \mathrm{d}\tau \Bigr\|_{L^{\infty}_{T, x, y}} 
  \leqslant CT \| \mathcal{F}_{x, y}[|u|^{2k}u - |v|^{2k}v] \|_{L^{\infty}_T L^1_{\xi, \eta}}.  
     \label{hat_difference}                 
}
By induction and the Young inequality, we see 
\EQS{ \label{hat_difference_est}
 \| \mathcal{F}_{x, y}[ |u|^{2k}u - |v|^{2k}v ]\|_{L^{\infty}_T L^1_{\xi, \eta}} 
 \leqslant (2k + 1)r^{2k} \|\hat{u} - \hat{v}\|_{L^{\infty}_T L^1_{\xi, \eta}}. 
}
From \eqref{hat_difference} and \eqref{hat_difference_est}, we have 
\EQS{ \label{Duhamel_differnce_L^infty} 
 \Bigl\| \int_0^t S(t - \tau)(|u|^{p - 1}u - |v|^{p - 1}v)(\tau)\, \mathrm{d}\tau \Bigr\|_{L^{\infty}_{T, x, y}} 
  \leqslant CT(2k + 1)^2 r^{2k}\|u - v\|_Y.  
}
From \eqref{hat_difference}--\eqref{Duhamel_differnce_L^infty}, we also obtain 
\EQS{ \label{Duhamel_difference_L^1} 
 \Bigl\| \int_0^t e^{-i(t - \tau)(\xi^2 + |\eta|)} \mathcal{F}_{x, y}[|u|^{p - 1}u - |v|^{p - 1}v](\tau) \, \mathrm{d} \tau \Bigr\|_{L^{\infty}_T L^1_{\xi, \eta}} 
 &\leqslant CT \|\mathcal{F}_{x, y}[|u|^{p - 1}u - |v|^{p - 1}v]\|_{L^{\infty}_T L^1_{\xi, \eta}} \notag \\ 
 &\leqslant CT(2k + 1)^2 r^{2k}\|u - v\|_Y. 
}
Collecting \eqref{Duhamel_differnce_E}, \eqref{Duhamel_differnce_L^infty}, \eqref{Duhamel_difference_L^1} 
and taking $T > 0$ such that $3CT(2k + 1)^2 r^{2k} = \frac{1}{2}$, then 
\EQQS{
 \Bigl\| \int_0^t S(t - \tau)(|u|^{p - 1}u - |v|^{p - 1}v)(\tau)\, \mathrm{d}\tau \Bigr\|_Y 
 \leqslant \frac{1}{2}\|u - v\|_Y. 
} 
Therefore $\Phi : Y \to Y$ is a contraction map. 
Thus by the fixed point argument, we have the desired result.

\end{proof}

\section*{Acknowledgement}
 The author is grateful to Dr. Masayuki Hayashi JSPS Research Fellow for personal discussion of this work.  
 The author is supported by JSPS KAKENHI Grant Number 820200500051.

\end{document}